\newtheorem{theorem}{Theorem}
\newtheorem{proposition}[theorem]{Proposition}
\newtheorem{corollary}[theorem]{Corollary}
\newtheorem*{theorem*}{Theorem}
\newtheorem{question}[theorem]{Question}
\newtheorem*{conjecture*}{Conjecture}
\newtheorem{lemma}[theorem]{Lemma}
\theoremstyle{definition}
\newtheorem*{definition*}{Definition}
\newtheorem*{proposition*}{Proposition}
\newtheorem{example}[theorem]{Example}
\newtheorem{remark}[theorem]{Remark}
\numberwithin{equation}{section}
\title{on the regular power structure of $p$-groups and applications}
\author{James Williams }
\address{J. Williams, School of Mathematics, University of Bristol, Bristol BS8 1UG, UK}
\email{j.l.i.williams@bristol.ac.uk}
\date{\today}
\begin{document}

\maketitle

\begin{abstract}
    In this paper, we give elementary proofs of the Restricted Burnside Problem and the Hughes Conjecture for finite $p$-groups with Hall's regular power structure property. Moreover, in this setting we determine an explicit bound on the order of a finite $d$-generator $p$-group of fixed exponent. Further applications of $p$-groups with regular power structure are presented. For example, we give a short new proof of an important property of powerful $p$-groups; namely, that the minimal number of generators of a subgroup of such a group $G$ is at most the number needed to generate $G$.
\end{abstract}

\section{Introduction}
It is widely recognised that the world of finite $p$-groups is a complicated one. However, perhaps surprisingly, most finite $p$-groups have many properties in common with abelian groups. In his landmark paper of 1933 \cite{phallcontribtothetheoryofgroupsofprimepowerorder}, Philip Hall began to draw out the analogy between a certain class of finite $p$-groups and abelian $p$-groups. He observed that \textit{regular} $p$-groups have three specific properties, which are all  satisfied by abelian groups. Groups satisfying these three properties are said to have a \textit{regular power structure}. 
\begin{definition*} 
A finite $p$-group $G$ has a \textit{regular power structure} if the following three conditions hold for all positive integers $i$:
\begin{align}
 G^{p^{i}} &=\{g^{p^{i}} \mid g \in G \} \label{eqn pth power} \\
      \Omega_{i}(G) &= \{g\in G \mid o(g) \leq p^{i} \} \label{eqn omega conditon} \\
 |G:G^{p^{i}}| &= | \Omega_{i}(G)| \label{eqn index condition}
\end{align}
where $\Omega_{i}(G) = \langle g \in G \mid o(g) \leq p^{i} \rangle.$
\end{definition*}

In the decades that followed many authors have studied the power structure of finite $p$-groups \cite{mannpowerstructure1, mannpowerstructureofpgroupsii,mingyaosurvery, kluempenpowerstructureof2groups} and numerous families of groups with a regular power structure  have been identified. For instance, as well as regular $p$-groups, it is known that powerful $p$-groups \cite{wilsonsphd, L2002, Fernandez-Alcober2007}, potent $p$-groups \cite{GONZALEZSANCHEZ2004193} and quasi-powerful $p$-groups \cite{williams2019quasipowerful}, all have a regular power structure for every odd prime $p$. 

Thus we see there is a very large family of groups  with these desirable, abelian-like properties. Since the order $p^{n}$ of a finite $p$-group depends on both the prime $p$ and the integer $n$, there are two ways in which we can demonstrate the vastness of the family of $p$-groups with regular power structure. On the one hand, if $n$ is fixed then for any prime $p$ which is greater than $n$, any group of order $p^{n}$ will be regular, and hence have a regular power structure  \cite[Corollary 12.3.1]{hall1999theory}. On the other hand if $p>2$ is a fixed prime then by the Higman-Sims bound (\cite{higmanenumeratingpgroups1960}, \cite{simsenumeratingpgroups})  the number of groups of order $p^{n}$ is $p^{2/27n^{3}+O(n^{-1/3})}$. In \cite{higmanenumeratingpgroups1960} a lower bound on the number of $p$-groups is obtained by exhibiting a family of groups of order $p^{n}$. The size of this family is $p^{2/27 n^{3} + O(n^{-1/3})}$. The groups in this family have nilpotency class $2$, and thus as $p$ is odd they are regular and so have a regular power structure. In particular, as $n \rightarrow \infty$ the probability that a group of order $p^{n}$ has a regular power structure tends to $1$.

In this paper our first goal is to demonstrate the utility of identifying a large family of groups with such desirable properties. To do this, we will show that two famous problems can be answered relatively easily when the problems are restricted to groups with regular power structure. Namely the Restricted Burnside Problem and the Hughes Conjecture.

Recall that the Restricted Burnside Problem asks whether or not there are only finitely many finite $d$-generator groups of exponent $e$. A result by Hall and Higman in 1956 effectively reduced the problem to the case where the exponent is a prime power  \cite{hallhigman1956}. In \cite{Kostrikin} Kostrikin showed that the problem has an affirmative answer in the case that the exponent is a prime. In 1989 Zelmanov announced a positive solution to the Restricted Burnside Problem \cite{zelmanov,zelmanov2}. For an excellent history of the Burnside problems, we recommend \cite{historyburnside}.

We give a short, novel proof that establishes an affirmative answer to the Restricted Burnside Problem for finite $p$-groups with a regular power structure, assuming only Kostrikin's result for prime exponent. Our method is elementary and does not rely on Lie Ring methods.

\begin{theorem}
\label{Theorem intro: Burnside}
Let $G$ be a finite $p$-group with regular power structure, exponent $p^{e}$ and $d$ generators. Then there is a bound on the order of $G$, depending only on $p,e$ and $d$.
\end{theorem}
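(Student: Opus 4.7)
The plan is induction on the exponent $e$. The base case $e=1$ is exactly Kostrikin's solution to the Restricted Burnside Problem for prime exponent: a $d$-generator finite group of exponent $p$ has order at most some $B(d,p)$, giving the desired bound.

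For the inductive step, consider $\overline{G} := G/\Omega_1(G)$. Condition~\eqref{eqn omega conditon} shows $g^{p^{e-1}} \in \Omega_1(G)$ for every $g \in G$, so $\overline{G}$ has exponent dividing $p^{e-1}$ and is $d$-generated. From condition~\eqref{eqn index condition} we get $|\Omega_1(G)| = |G:G^p|$, and since $G/G^p$ is a $d$-generator group of exponent $p$, Kostrikin's theorem yields $|\Omega_1(G)| \leq B(d,p)$. Granted that $\overline{G}$ inherits a regular power structure, the inductive hypothesis produces a bound $|\overline{G}| \leq f(p, e-1, d)$, whence
$$|G| \;=\; |\Omega_1(G)| \cdot |\overline{G}| \;\leq\; B(d,p)\cdot f(p, e-1, d),$$
completing the induction.

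The main obstacle is verifying the regular power structure of $\overline{G}$. Condition~\eqref{eqn pth power} passes trivially to quotients. For condition~\eqref{eqn omega conditon}, an element $\bar g \in \overline{G}$ has order at most $p^i$ precisely when $g^{p^{i+1}} = 1$, so the set $\{\bar g : o(\bar g)\le p^i\}$ coincides with the subgroup $\Omega_{i+1}(G)/\Omega_1(G)$, and this set-level equality gives $\Omega_i(\overline{G}) = \Omega_{i+1}(G)/\Omega_1(G)$. The delicate point is condition~\eqref{eqn index condition} for $\overline{G}$, which after a direct manipulation of the orders $|G:G^{p^{i+1}}|$, $|G:G^p|$ and $|G:G^{p^i}\Omega_1(G)|$ reduces to the identity
$$|G^{p^i}:G^{p^{i+1}}| \;=\; |\Omega_1(G)\cap G^{p^i}| \qquad (i \ge 1).$$
Consider the set map $\phi_i : \Omega_{i+1}(G) \to \Omega_1(G)\cap G^{p^i}$ given by $g \mapsto g^{p^i}$: condition~\eqref{eqn pth power} makes it surjective, and condition~\eqref{eqn omega conditon} identifies its fibre over the identity as $\Omega_i(G)$. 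The identity above is then equivalent to showing that every fibre of $\phi_i$ has the same cardinality $|\Omega_i(G)|$. This uniform-fibre property is automatic in the abelian case (translate by a preimage of $y$), but in the general RPS setting the power map is not a homomorphism and one must invoke condition~\eqref{eqn index condition} applied at multiple indices to force the counting identity; I expect this to be where the bulk of the technical work lies.
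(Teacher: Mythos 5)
Your overall strategy (induction on the exponent, with Kostrikin's theorem as the base case and the factorisation $|G|=|\Omega_1(G)|\cdot|G/\Omega_1(G)|$ with $|\Omega_1(G)|=|G:G^p|\le B(d,p)$ from \eqref{eqn index condition}) is the same as the paper's in spirit, but your choice of quotient creates a genuine gap that you yourself flag and do not close: you need $G/\Omega_1(G)$ to inherit the regular power structure, and your reduction of condition \eqref{eqn index condition} for this quotient to the identity $|G^{p^i}:G^{p^{i+1}}|=|\Omega_1(G)\cap G^{p^i}|$ is left as "where the bulk of the technical work lies." This is not a routine verification: the three axioms are hypotheses on $G$ itself, not on the subgroups $G^{p^i}$ (which need not themselves have a regular power structure), so the uniform-fibre counting you would need for the power map $\phi_i$ does not obviously follow, and as written the induction does not go through.

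The paper sidesteps this entirely by factoring the other way: $|G|=|G/G^{p^{e-1}}|\cdot|G^{p^{e-1}}|$. The quotient $G/G^{p^{e-1}}$ is the $d$-generator group of exponent $p^{e-1}$ to which the inductive hypothesis is applied, and the kernel is bounded directly with no quotient-inheritance issue for $\Omega_1$: by \eqref{eqn pth power} every element of $G^{p^{e-1}}$ is literally of the form $g^{p^{e-1}}$ and hence has order at most $p$, so $G^{p^{e-1}}\le\Omega_1(G)$, and then \eqref{eqn index condition} gives $|\Omega_1(G)|=|G:G^p|\le n_{(d,p)}$. This yields the clean bound $|G|\le n_{(d,p)}^{\,e}$ (which the paper shows is sharp) without ever needing the counting identity you got stuck on. If you want to salvage your argument, the most economical fix is to abandon the quotient by $\Omega_1(G)$ and instead quotient by $G^{p^{e-1}}$, using the containment $G^{p^{e-1}}\le\Omega_1(G)$ to bound the kernel.
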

In other words, there is a largest finite, $d$-generator $p$-group with exponent $p^{e}$ and regular power structure. 
Furthermore, it's worth noting that we are able to explicitly bound the order of the group in terms of the order of the largest $d$-generator group of exponent $p$. We give an infinite family of groups which attains the bound (see Example \ref{example family of groups}). 

We next turn our attention to the Hughes conjecture (originally posed in \cite{hughesconjectureposed}). 

\begin{conjecture*}[Hughes \cite{hughesconjectureposed}]
Let $G$ be a group, $p$ a prime and let $H_{p}(G)$ be the subgroup generated by the elements in $G$ that do not have order $p$. If $H_{p}(G) \neq 1$, then either $H_{p}(G)=G$ or $|G:H_{p}(G)|=p$.
\end{conjecture*}

 There are many settings where the conjecture has been established (see Section \ref{section Hughes}), but it is known to be false in general. We extend the well known result that regular $p$-groups satisfy the Hughes conjecture to groups with regular power structure. Moreover we can determine $H_{p}$ precisely. 

\begin{theorem} \label{Theorem Intro: Hughes Conjecture}
If $G$ is a finite $p$-group with a regular power structure, then $G$ satisfies the Hughes conjecture. In particular, if $G$ has exponent $p$ then $H_{p}(G)=1$, otherwise $H_{p}(G)=G$.
\end{theorem}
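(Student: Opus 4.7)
The plan is to split on the exponent of $G$. If $G$ has exponent $p$, then every non-identity element has order exactly $p$, so the only $g \in G$ with $o(g) \neq p$ is the identity, giving $H_{p}(G) = 1$ immediately, and the Hughes conjecture holds trivially. For the rest I would assume $G$ has exponent at least $p^{2}$ and aim to show $H_{p}(G) = G$.

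The first step would be to use condition \eqref{eqn omega conditon} of the regular power structure, which says $\Omega_{1}(G) = \{g \in G : g^{p}=1\}$. This yields a dichotomy for each $g \in G$: either $g \in \Omega_{1}(G)$, or else $g^{p}\neq 1$, forcing $o(g)\geq p^{2}$ and hence $g \in H_{p}(G)$. Combined with $1 \in H_{p}(G)$ this gives the inclusion $H_{p}(G) \supseteq \{1\} \cup (G \setminus \Omega_{1}(G))$ and therefore the bound
\[
|H_{p}(G)| \;\geq\; |G| - |\Omega_{1}(G)| + 1.
\]

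The second step is a short counting contradiction. Suppose $H_{p}(G) < G$; since $G$ is a $p$-group this forces $|H_{p}(G)| \leq |G|/p$. Writing $a = |G^{p}|$ and $b = |\Omega_{1}(G)|$, the index condition \eqref{eqn index condition} gives $|G| = ab$, while the assumption on the exponent gives $a \geq p$. Substituting into the previous inequality and rearranging produces
\[
b\bigl(p - a(p-1)\bigr) \;\geq\; p.
\]
Since $a \geq p$ we have $a(p-1) \geq p(p-1) \geq p$, so the left-hand side is non-positive, contradicting the fact that the right-hand side is positive. Hence $H_{p}(G) = G$.

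The main challenge is really just identifying which single piece of the regular power structure to use at each step; the argument is otherwise mechanical. The key realisation is that condition \eqref{eqn omega conditon} already forces every element outside $\Omega_{1}(G)$ into $H_{p}(G)$, after which the index formula \eqref{eqn index condition} leaves no room for $H_{p}(G)$ to be a proper subgroup.
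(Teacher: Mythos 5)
Your proof is correct, but it takes a genuinely different route from the paper's. The paper argues with generating sets: it writes $G=\langle a_{1},\dots,a_{s},b_{1},\dots,b_{t}\rangle$ with the $a_{i}$ of order $p$ and the $b_{j}$ of order greater than $p$, and uses only the $\Omega$-condition \eqref{eqn omega conditon} to show that for each $a_{i}$ some product $a_{i}b_{j}$ must have order greater than $p$ (otherwise $G$ would be generated by elements of order at most $p$ and hence have exponent $p$), whence $a_{i}=(a_{i}b_{j})b_{j}^{-1}\in H_{p}(G)$. Your argument is instead a counting argument whose real engine is the index condition \eqref{eqn index condition}: every element outside $\Omega_{1}(G)$ lies in $H_{p}(G)$ --- note this containment holds in any group, since an element not in the subgroup generated by the elements of order at most $p$ cannot itself have order at most $p$, so your appeal to \eqref{eqn omega conditon} at that step is not actually needed --- and \eqref{eqn index condition} forces $|\Omega_{1}(G)|=|G|/|G^{p}|\leq |G|/p$ once the exponent exceeds $p$, so $H_{p}(G)$ contains more than $|G|/p$ elements and, being a subgroup of a $p$-group, must equal $G$. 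The algebra in your final display checks out. A pleasant by-product of the comparison is that each argument isolates a different single axiom as sufficient: the paper's proof shows that condition \eqref{eqn omega conditon} alone yields the Hughes conjecture for these groups, while yours shows that condition \eqref{eqn index condition} alone (for $i=1$) already does the job.
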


We next take a different viewpoint. We look at a setting where, perhaps surprisingly, groups with a regular power structure occur naturally. 
Taking advantage of the fact that for odd primes metacyclic $p$-groups are powerful, and thus enjoy many nice properties including a regular power structure, we give a new proof of the following classical result. 

\begin{theorem}
\label{theorem into: G cyclic if all normal abelian subgroups are cyclic}

Let $p$ be an odd prime and $G$ a finite $p$-group with the property that every normal abelian subgroup is cyclic. Then $G$ is cyclic. 
\end{theorem}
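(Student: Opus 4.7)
The plan is to reduce to showing that $G$ is metacyclic (hence powerful, hence equipped with a regular power structure) and then use that structure to force $\Omega_{1}(G)$ to be cyclic of order at most $p$, whence the Burnside basis theorem finishes the argument.

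First I would pick a maximal normal abelian subgroup $A$ of $G$; by hypothesis $A$ is cyclic, say of order $p^{n}$. A standard $p$-group argument shows $C_{G}(A)=A$: otherwise, lifting a subgroup of order $p$ of $C_{G}(A)/A$ that meets $Z(G/A)$ would produce a normal abelian overgroup of $A$, contradicting maximality. Since $A$ is cyclic of $p$-power order with $p$ odd, $\operatorname{Aut}(A)\cong C_{p-1}\times C_{p^{n-1}}$ has a cyclic Sylow $p$-subgroup, so the embedding $G/A=N_{G}(A)/C_{G}(A)\hookrightarrow\operatorname{Aut}(A)$ forces $G/A$ to be cyclic. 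Thus $G$ is metacyclic.

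Next I would invoke the result (advertised in the introduction) that for odd $p$ every metacyclic $p$-group is powerful; a short self-contained check is that $G/G^{p}$ is metacyclic of exponent $p$, and if $\langle a\rangle$ is the cyclic normal subgroup with cyclic quotient generated by $b$, the relation $b^{p}=1$ applied to $b a b^{-1}=a^{k}$ gives $k^{p}\equiv 1\pmod{p}$, hence $k\equiv 1\pmod{p}$, so $G/G^{p}$ is abelian and $[G,G]\leq G^{p}$. Consequently $G$ carries a regular power structure. Now set $H=\Omega_{1}(G)$. This is a characteristic subgroup, has exponent $p$ by property \eqref{eqn omega conditon}, and satisfies $|G:G^{p}|=|H|$ by \eqref{eqn index condition}. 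Since $H\cap A$ is a subgroup of the cyclic group $A$ and $H/(H\cap A)\hookrightarrow G/A$ is cyclic, $H$ is itself metacyclic; applying the exponent-$p$ observation once more yields that $H$ is abelian. But $H$ is normal abelian, so the hypothesis forces $H$ to be cyclic, and being of exponent $p$ this gives $|H|\leq p$. Therefore $|G:G^{p}|\leq p$, hence $|G:\Phi(G)|\leq p$, and the Burnside basis theorem gives $d(G)\leq 1$: $G$ is cyclic.

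The main obstacle I expect is keeping the auxiliary steps elementary in the spirit of the paper, especially the proof that metacyclic $p$-groups are powerful for odd $p$; the argument above makes this into a one-line observation about metacyclic groups of exponent $p$, so the whole proof chains together cleanly without invoking deep structural results beyond those the introduction has already put on the table.
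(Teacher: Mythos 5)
Your proof is correct, but it takes a genuinely different route from the paper's. The paper argues locally and inductively: it first uses $Z_{2}(G)$ to produce a cyclic normal subgroup of order at least $p^{2}$, then shows that a cyclic normal subgroup $\langle a\rangle$ of order $p^{k}\geq p^{2}$ sits inside a normal subgroup $N$ of order $p^{k+1}$ which is metacyclic, hence powerful; Lemma \ref{lemma adjusting generators for 2 gen pow p group} rewrites $N=\langle a,c\rangle$ with $o(c)=p$, and the regular power structure of the small group $N$ forces the normal abelian subgroup $\langle a^{p},c\rangle$ to be cyclic, whence $N$ is cyclic and the induction climbs all the way up to $G$. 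You instead argue globally: a maximal normal abelian subgroup $A$ is self-centralizing (standard nilpotency argument) and cyclic by hypothesis, and since $\operatorname{Aut}(C_{p^{n}})$ has cyclic Sylow $p$-subgroup for odd $p$, the embedding $G/A\hookrightarrow\operatorname{Aut}(A)$ makes $G/A$ cyclic, so $G$ itself is metacyclic, hence powerful, hence has a regular power structure; then $\Omega_{1}(G)$ is metacyclic of exponent $p$, so abelian, so normal abelian, so cyclic of order at most $p$, and the index condition (\ref{eqn index condition}) together with $\Phi(G)=G^{p}$ and the Burnside basis theorem give $d(G)\leq 1$. All of your steps check out, including the self-contained verification that metacyclic $p$-groups are powerful for odd $p$ (which parallels the paper's Lemma \ref{lemma metacyclic pgroups are powerful for odd p}). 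What each approach buys: your endgame via $|G:G^{p}|=|\Omega_{1}(G)|$ is a clean global use of the regular power structure and is shorter overall, but it reinstates the opening moves of the classical proof the paper is deliberately circumventing, namely the $N/C$ argument for a maximal normal abelian subgroup combined with the structure of $\operatorname{Aut}(C_{p^{n}})$; the paper's climbing argument only ever invokes the regular power structure of two-generator normal subgroups of order $p^{k+1}$ and the weaker observation that an element of order $p$ cannot be conjugate to a proper power of itself.
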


This is a rather innocuous looking statement, but the typical proof of this result can be quite involved \cite[Theorem 4.10]{gorensteintheory}. Our proof makes use of the fact that powerful $p$-groups arise naturally in this setting.

Powerful $p$-groups are in many ways very similar to abelian groups. Indeed, not only do they have a regular power structure when $p$ is odd, but in fact for any prime they have the property that the minimal number of generators of a subgroup cannot exceed the minimal number of generators of the group itself \cite[Theorem 1.12]{LUBOTZKY1987484}. This is one of the most important properties of powerful $p$-groups.

To conclude this note we present a new, short and elementary proof of this important fact, which relies only on basic properties of powerful $p$-groups. \vspace{3mm}

\noindent \textbf{Notation:} Our notation is standard. We denote the order of $x\in G$ as $o(x)$.  All iterated commutators are left normed. The terms of the \textit{lower central series} of $G$ are defined recursively as $\gamma_{1}(G)=G$ and $\gamma_{k+1}(G)=[\gamma_{k}(G),G]$ for integers $k\geq 1$. The $i$th term of the \textit{upper central series} of a group $G$ is denoted  $Z_{i}(G)$. We use bar notation for images in a quotient group; it will always be made explicitly clear what the quotient group under consideration is. We denote the minimal number of generators of a finite group $G$ by $d(G)$. The cyclic group of order $n$ is denoted by $C_{n}$. The Frattini subgroup of $G$ is denoted by $\Phi(G)$.

\section{Restricted Burnside Problem}
\setcounter{theorem}{0}
\numberwithin{theorem}{section}
In this section we show that there are only finitely many finite $m$ generator groups of exponent $p^{e}$ with regular power structure. 
We will make use of the result of Kostrikin \cite{Kostrikin}, which states that there exists a largest finite $d$-generator group of exponent $p$. We shall denote the order of the largest finite $d$-generator group of exponent $p$ as $n_{(d,p)}$ 

\begin{theorem}
\label{theorem burnside more detailed with bound}
Given a finite $p$-group $G$ with $d$ generators and exponent $p^{e}$ there is a bound on the order of $G$ depending only on $p,e$ and $d$. 
In particular $|G|\leq n_{(d,p)}{}^{e}.$
\end{theorem}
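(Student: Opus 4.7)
The plan is to exploit the descending filtration
\[
G = G^{p^{0}} \geq G^{p} \geq G^{p^{2}} \geq \cdots \geq G^{p^{e}} = 1,
\]
where the last equality uses \eqref{eqn pth power} together with the hypothesis that $G$ has exponent $p^{e}$. The telescoping factorisation
\[
|G| = \prod_{i=0}^{e-1} \bigl|G^{p^{i}} : G^{p^{i+1}}\bigr|
\]
reduces the theorem to showing that $|G^{p^{i}} : G^{p^{i+1}}| \leq n_{(d,p)}$ for each $i \in \{0,1,\dots,e-1\}$, whereupon the product is at most $n_{(d,p)}^{e}$.

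For each $i$, the quotient $G^{p^{i}}/G^{p^{i+1}}$ has exponent $p$: by \eqref{eqn pth power} every element has the form $g^{p^{i}} G^{p^{i+1}}$, whose $p$-th power is $g^{p^{i+1}} G^{p^{i+1}} = G^{p^{i+1}}$. By Kostrikin's theorem it is therefore enough to show that $G^{p^{i}}/G^{p^{i+1}}$ is $d$-generated. I would try to establish this by showing that the $p^{i}$-th power operation descends to a surjective group homomorphism
\[
\phi_{i} \colon G \longrightarrow G^{p^{i}}/G^{p^{i+1}}, \qquad g \longmapsto g^{p^{i}} G^{p^{i+1}},
\]
whose image, as a quotient of $G$, is then generated by at most $d$ elements. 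Surjectivity is immediate from \eqref{eqn pth power}, so the real content is the congruence $(gh)^{p^{i}} \equiv g^{p^{i}} h^{p^{i}} \pmod{G^{p^{i+1}}}$ for all $g, h \in G$.

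I expect this congruence to be the main obstacle, and it is precisely where all three Hall conditions should enter. The fibre-size information from \eqref{eqn index condition} forces the $p^{i}$-th power map $G \to G^{p^{i}}$ to be as "additive" as possible, while the subgroup characterisation of $\Omega_{i}(G)$ in \eqref{eqn omega conditon} together with \eqref{eqn pth power} should let one identify the commutator error $(g^{p^{i}} h^{p^{i}})^{-1}(gh)^{p^{i}}$ as an element of $G^{p^{i+1}}$. A natural fallback, if the direct manipulation proves awkward, is to induct on $e$: verify that $G/G^{p^{e-1}}$ inherits the regular power structure (so induction yields $|G/G^{p^{e-1}}| \leq n_{(d,p)}^{e-1}$), and then bound $|G^{p^{e-1}}|$ separately using that it has exponent $p$ and at most $d$ generators. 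Either route hinges on the same point: transferring generator counts from $G$ to its $p^{i}$-th power subgroups via the regular power structure.
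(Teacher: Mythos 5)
Your overall architecture is right, but both of your routes bottom out in the same unproven claim, which you yourself flag: transferring the generator bound $d$ from $G$ to the power subgroups $G^{p^{i}}$ (via the congruence $(gh)^{p^{i}}\equiv g^{p^{i}}h^{p^{i}} \pmod{G^{p^{i+1}}}$ in the telescoping route, or via the bare assertion that $G^{p^{e-1}}$ is $d$-generated in the fallback). Neither of these is supplied by the three Hall conditions in any obvious way --- subgroups of $p$-groups can need more generators than the ambient group, and the displayed congruence is a statement about the power map being a homomorphism modulo $G^{p^{i+1}}$ that you would have to extract from conditions \eqref{eqn pth power}--\eqref{eqn index condition}; as written the proof is incomplete at exactly the point where all the content lives.

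The paper's proof (which matches your fallback induction on $e$ up to this point) sidesteps generator-counting on power subgroups entirely. The key observation you are missing is that $G^{p^{e-1}}$ need not be generated efficiently --- it only needs to be \emph{small}, and this follows from the other two conditions: by \eqref{eqn pth power} every element of $G^{p^{e-1}}$ is an honest $p^{e-1}$-th power and hence has order at most $p$, so $G^{p^{e-1}}\leq \Omega_{1}(G)$; and by \eqref{eqn index condition}, $|\Omega_{1}(G)|=|G:G^{p}|$, which is the order of the \emph{quotient} $G/G^{p}$. A quotient of $G$ is automatically $d$-generated and $G/G^{p}$ has exponent $p$, so Kostrikin gives $|G^{p^{e-1}}|\leq|\Omega_{1}(G)|=|G/G^{p}|\leq n_{(d,p)}$, and combined with the inductive bound $|G/G^{p^{e-1}}|\leq n_{(d,p)}^{e-1}$ this yields $|G|\leq n_{(d,p)}^{e}$. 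If you want to salvage your write-up, replace the appeal to ``$G^{p^{e-1}}$ has at most $d$ generators'' with this $\Omega_{1}$/index argument; the telescoping version would additionally require you to prove the mod-$G^{p^{i+1}}$ homomorphism property, which is harder than the theorem needs.
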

\begin{proof}
We proceed by induction on the exponent. The base case when the exponent is $p$ is dealt with by Kostrikin's theorem.

Now suppose that $G$ is a $d$-generator group with regular power structure, exponent $p^{k}$ and that the claim holds for $d$-generator groups with regular power structure of smaller exponent.

Notice that $G/G^{p^{e-1}}$ is a $d$-generator group with regular power structure, of exponent $p^{e-1}$. Therefore by the inductive hypothesis we have 
\begin{equation}
\label{eqn bound on G/Gpe-1}
    |G/G^{p^{e-1}}|  \leq n_{(d,p)}{}^{e-1}.
\end{equation}
Next we claim that $G^{p^{e-1}}$ is contained in $\Omega_{1}(G)$. To see this, notice that  by the first regular power structure condition (\ref{eqn pth power}) each element of $G^{p^{e-1}}$ can be written in the form $g^{p^{e-1}}$ for some $g \in G$ and hence is of order $p$. Hence 
\begin{equation}
\label{eqn Gpe-1 in omega 1}
   G^{p^{e-1}} \leq \Omega_{1}(G).  
\end{equation}

Finally notice that by the third condition (\ref{eqn index condition}) we have 
\begin{equation}
\label{eqn bound on omega1}
    |\Omega_{1}(G)| = |G/G^{p}| \leq n_{(d,p)},
\end{equation} 
since $G/G^{p}$ is a $d$-generator group with exponent $p$. 
Putting these three equations together we have that 
\begin{align*}
    |G| &=|G/G^{p^{e-1}}| | G^{p^{e-1}}| && \\
        &\leq |G/G^{p^{e-1}}| |\Omega_{1}(G)| &&\text{by (\ref{eqn Gpe-1 in omega 1}}) \\
        &\leq n_{(d,p)}{}^{e-1}\cdot n_{(d,p)} &&\text{by (\ref{eqn bound on G/Gpe-1}) and (\ref{eqn bound on omega1})}\\
        &=n_{(d,p)}{}^{e}. &&
\end{align*}
This completes the proof. 
\end{proof}
Thus we have established Theorem \ref{Theorem intro: Burnside} as stated in the introduction. We present an infinite family of $2$-generator $3$-groups in order to demonstrate that the bound on the order of $G$ in Theorem \ref{theorem burnside more detailed with bound} is sharp. 
\begin{example}
\label{example family of groups}
 Consider the following group given by the presentation $$ G_{e}=\langle a,b,c \mid a^{3^{e}}, b^{3^{e}}, c^{3^{e}}, [c,a], [c,b], [b,a]=c \rangle. $$
It is easy to see that $G_{e}$ is a semidirect product of the form $ (C_{3^{e}} \times C_{3^{e}}) \rtimes C_{3^{e}}$ and so is of order $3^{3e}$. Moreover notice that $G_{e}$ has nilpotency class $2$ and therefore is regular. It follows that the exponent is $3^{e}$.

It is well known that the largest $2$-generator group of exponent $3$ is of order $27$ therefore $n_{(2,3)}=27$  \cite[Theorem 18.2.1]{hall1999theory}. Hence Theorem \ref{theorem burnside more detailed with bound} implies that the largest $2$-generator group of exponent $3^{e}$ has order at most $n_{(2,3)}^{e}=27^{e}=3^{3e} $. The group $G_{e}$ demonstrates that the bound is attained.
\end{example}

We conclude this section by presenting a question which suggests an additional motivation for Theorem \ref{theorem burnside more detailed with bound}.

\begin{question}
\label{question about burnside}
Does there exist a function $f(p,e,d)$ such that every finite $d$-generator $p$-group of exponent $p^{e}$ has a subnormal series of length at most $f(p,e,d)$ and in which all of the factors have regular power structure?
\end{question}

By \cite[Lemma 4.2.2]{hallhigman1956} in the celebrated paper of Hall and Higman, an affirmative answer to Question \ref{question about burnside} implies an affirmative  solution to the Restricted Burnside Problem for groups of prime power exponent. 

\section{Hughes Conjecture} 
\label{section Hughes}
In this section we study the Hughes conjecture,  in the context of groups with a regular power structure. We begin by recalling the Hughes conjecture,  originally posed in \cite{hughesconjectureposed}.

\begin{conjecture*}[Hughes \cite{hughesconjectureposed}]
Let $G$ be a group, $p$ a prime and let $H_{p}(G)$ be the subgroup generated by the elements in $G$ that do not have order $p$. If $H_{p}(G) \neq 1$, then either $H_{p}(G)=G$ or $|G:H_{p}(G)|=p$.
\end{conjecture*}

We list a few instances where the conjecture holds:

\begin{enumerate}[(i)]
    \item Any group $G$ and prime $p=2$ or $p=3$  \cite[Lemma 4]{hughespartialdifferencesets},\cite{strausszekeresonhughes}.
    
    \item $G$ a finite group which is not a $p$-group \cite{hughesthompson}.
    
    \item $G$ a finite metabelian $p$-group \cite{HoganKappe}.
    
    \item $G$ a finite $p$-group with nilpotency class at most $2p-2$ \cite{macdonaldhughesproblemclass2p-2}.
\end{enumerate}

There are many other settings where the conjecture has been established (for example see \cite{gallian, gallian2, gallian3}). 

However the conjecture is false in general. The first counterexample, which was constructed by Wall in \cite{wall}, was a $3$-generator finite $5$-group $G$ with $|G:H_{p}(G)|=25$. Further counterexamples have since been constructed and the conjecture is now known to be false for primes $ 5 \leq p \leq 19$ (\cite{oncounterexamplestothehughesconjecture,khukhrohughes1, khukhrohughes2}). It is expected that counterexamples exist for all primes $p>3$.

On the other hand, in \cite{khukhrohughes3} Khukhro proved the remarkable result that the Hughes conjecture is true for almost all finite $p$-groups. In particular, for a given prime $p$, if the $d$-generator group $G$ is a counterexample to the Hughes conjecture, then $|G| \leq p^{\beta(d,p)}$ where $\beta$ is some function depending only on $d$ and $p$.

In this section we shall establish Theorem \ref{Theorem Intro: Hughes Conjecture}, that the Hughes conjecture is satisfied by the family of finite $p$-groups with regular power structure.

It is known that if $G$ is a regular $p$-group then the Hughes conjecture holds. Indeed, if the exponent of $G$ is $p$ then $H_{p}(G)=1$, and if the exponent of $G$ is greater than $p$ then there is an element $a \in G$ of order $p^{2}$. Let $b$ be any element of $G$ of order $p$. As the group is regular we have that 
$$(ab)^p = a^p b^p c^p = b^p c^p$$
for some $c \in \langle a ,b \rangle ^{\prime}$. However 
$$ o([a,b])=o((b^{-1})^{a}\cdot b) \leq p $$ by property (\ref{eqn omega conditon}) and the fact that regular $p$-groups have a regular power structure.
Thus
$$(ab)^p=b^p.$$
In particular $ab$ does not have order $p$ and so both $ab$ and $b$ are in $H_{p}(G)$, and consequently $a \in H_{p}(G)$. We note that for a regular $p$-group, the case that $H_{p}(G)$ has index $p$ does not occur (except trivially for $G=C_{p}$). 

We will show that these observations can be extended to any group with a regular power structure. We thus turn to the proof of Theorem \ref{Theorem Intro: Hughes Conjecture}.

\begin{proof}[Proof of Theorem \ref{Theorem Intro: Hughes Conjecture}]
If $G$ has exponent $p$ then $H_{p}(G)=1$. Thus we can now consider the case when $G$ has exponent strictly greater than $p$. In this case we know that any generating set for $G$ must contain an element of order greater than $p$, or else by property (\ref{eqn omega conditon}) the exponent of $G$ would be $p$. Let $G= \langle a_{1}, \dots, a_{s}, b_{1}, \dots, b_{t} \rangle$ where the $a_{i}$'s have order $p$ and the $b_{j}$'s have order greater than $p$. 

We will show that each $a_{i} \in H_{p}(G)$. Consider for some $i \in \{1,\dots, s \}$ the products $a_{i}b_{1}, \dots, a_{i}b_{t}$ and notice that $\langle a_{1}, \dots, a_{s}, a_{i}b_{1}, \dots, a_{i}b_{t} \rangle$ is a generating set for $G$. If each of the products $a_{i}b_{1}, \dots, a_{i}b_{t}$ were of order $p$, then by regular power structure property (\ref{eqn omega conditon}) it follows $G$ has exponent $p$, a contradiction. Thus we must have that for some $j$, the product $a_{i}b_{j}$ has order greater than $p$. Then it follows that $a_{i}b_{j}$ and $b_{j}$ are both in $H_{p}(G)$ and thus $a_{i} \in H_{p}(G)$. Hence $H_{p}(G)=G$.
\end{proof}

In particular we have the following.

\begin{corollary}
Let $G$ be a finite $p$-group with regular power structure. If $G$ has exponent $p$ then $H_{p}(G)=1$, otherwise $H_{p}(G)=G$.
\end{corollary}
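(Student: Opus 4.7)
The plan is to observe that this corollary is essentially a direct restatement of the ``in particular'' clause of Theorem \ref{Theorem Intro: Hughes Conjecture}, so the proof reduces to checking two cases based on the exponent of $G$.

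First, I would handle the case where $G$ has exponent $p$. Since $G$ is a finite $p$-group, every element has order a power of $p$, and the exponent bound forces every non-identity element to have order exactly $p$. Therefore the set $\{g \in G : o(g) \neq p\}$ contains only the identity, and the subgroup it generates is trivial. So $H_p(G) = 1$. Note that regular power structure is not actually needed for this case.

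Second, if the exponent of $G$ strictly exceeds $p$, I would appeal to Theorem \ref{Theorem Intro: Hughes Conjecture} directly. The argument there produces, for any generating set $\{a_1, \ldots, a_s, b_1, \ldots, b_t\}$ in which the $a_i$ have order $p$ and the $b_j$ have order greater than $p$, an index $j$ (depending on $i$) such that $a_i b_j$ has order greater than $p$; otherwise property (\ref{eqn omega conditon}) would force $G$ to have exponent $p$, contradicting our assumption. Then $a_i b_j$ and $b_j$ both belong to $H_p(G)$, hence so does $a_i$. Since every generator of $G$ lies in $H_p(G)$, we conclude $H_p(G) = G$.

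There is no real obstacle here, as the substantive work has already been done in Theorem \ref{Theorem Intro: Hughes Conjecture}; the only content of the corollary is the sharpening that the index-$p$ possibility in Hughes' conjecture does not arise for groups with regular power structure.
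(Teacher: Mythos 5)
Your proposal is correct and matches the paper's treatment: the corollary is stated there without a separate proof precisely because it restates the ``in particular'' clause of Theorem \ref{Theorem Intro: Hughes Conjecture}, and your two-case argument reproduces the proof of that theorem. Nothing further is needed.
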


Notice that for groups $G$ with a regular power structure, we have that $[G:H_{p}]=p$ if and only if $G=C_{p}$.

\section{On \texorpdfstring{$p$}{p}-groups with every normal abelian subgroup cyclic}

It is a classical result that if $p$ is an odd prime and $G$ is a $p$-group such that every normal abelian subgroup is cyclic, then $G$ itself is cyclic. In this section we provide an alternative proof of this fact, making use of the appearance of metacyclic $p$-subgroups and their regular power structure for odd primes. This relies on the observation that for odd primes $p$, metacyclic $p$-groups are powerful, and so have regular power structure. 

Powerful $p$-groups, introduced in \cite{LUBOTZKY1987484}, appear throughout the rest of this paper.  Recall that a finite $p$-group $G$ is said to be \textit{powerful} if $[G,G]\leq G^{p}$ in the case that $p$ is odd, and $[G,G] \leq G^{4}$ in the case that $p=2$.

In this paper we make use of the following basic facts about powerful $p$-groups, often without explicit mention. 
\begin{proposition}
Let $G$ be a powerful $p$-group and $i,j \geq 0$, then
\begin{enumerate}[(i)]
    \item $G^{p^{i}}=\langle g^{p^{i}} \mid g \in G \rangle = \{ g^{p^{i}} \mid g \in G \}.$
    \item $[G^{p^{i}},G^{p^{j}}]\leq[G,G]^{p^{i+j}}.$
\end{enumerate}
\end{proposition}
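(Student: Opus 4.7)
The plan is to prove (i) and (ii) by a simultaneous induction, since they naturally feed into one another. Both statements amount to saying that certain power and commutator operations in a powerful $p$-group behave as they do in an abelian group modulo $p$-th powers, and the main driver in each case is the Hall--Petresco collection formula together with the defining condition $[G,G]\leq G^{p}$.

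For (i), I would argue by induction on $i$. The case $i=0$ is trivial, and the substantive content lies in the case $i=1$: every element of the subgroup $G^{p}$ must be realised as a single $p$-th power. The key identity is
\[
(xy)^{p}= x^{p}y^{p}\prod_{k=2}^{p}c_{k}^{\binom{p}{k}},\qquad c_{k}\in\gamma_{k}(\langle x,y\rangle).
\]
Since $G$ is powerful, $\gamma_{2}(G)\leq G^{p}$, so each $c_{k}$ with $k\geq 2$ lies in $G^{p}$; combined with $p\mid\binom{p}{k}$ for $1\leq k\leq p-1$ and a separate check that the $c_{p}$ term lies in a sufficiently deep subgroup, the right-hand side differs from $(xy)^{p}$ by an element of $(G^{p})^{p}$. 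An induction on the nilpotency class of $G$, working modulo the last nontrivial term of the lower central series, then shows that the product of two $p$-th powers is a $p$-th power, giving $G^{p}=\{g^{p}\mid g\in G\}$. For the inductive step in $i$, use that $G^{p}$ is again powerful, so by induction applied to $G^{p}$ we have $G^{p^{i}}=(G^{p})^{p^{i-1}}=\{y^{p^{i-1}}\mid y\in G^{p}\}$, and then the $i=1$ case rewrites each such $y$ as a $p$-th power.

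For (ii), induct on $i+j$, the base case being immediate. By (i), a typical generator of $[G^{p^{i}},G^{p^{j}}]$ has the form $[x^{p^{i}},y^{p^{j}}]$, so it suffices to show $[x^{p^{i}},y^{p^{j}}]\in[G,G]^{p^{i+j}}$. Apply a commutator analogue of Hall--Petresco: $[a^{p},b]$ equals $[a,b]^{p}$ times a product of higher-weight commutators, each of which lies in $\gamma_{3}(G)$ and hence, by powerfulness, in $[G,G]^{p}$. Iterating and using the inductive hypothesis to absorb the higher-weight error terms into $[G,G]^{p^{i+j}}$ gives the claim.

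The main obstacle will be the careful bookkeeping of the error terms in the two collection formulas: one must verify that at each stage the $\gamma_{k}$-contributions actually lie in a deep enough subgroup (typically $(G^{p})^{p}$ or $[G,G]^{p}$) for the induction to close. This is precisely where the powerful hypothesis is used repeatedly, via the stronger fact that subgroups such as $[G,G]$ and $G^{p}$ are themselves \emph{powerfully embedded} in $G$; without this, the error terms would only lie in $G^{p}$ rather than in $G^{p^{2}}$, and the induction would not terminate.
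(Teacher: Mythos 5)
The paper does not actually prove this proposition: it is stated as a collection of standard facts and the reader is referred to Chapter 11 of Khukhro's book for the proofs. Your outline is essentially the argument given in that reference (and in Lubotzky--Mann): the Hall--Petresco collection formula, the observation that the error terms land in $(G^{p})^{p}$ (for (i)) respectively $[G,G]^{p}$ (for (ii)) because $[G,G]$ and $G^{p}$ are powerfully embedded, and an induction that closes precisely because of this extra depth. So your strategy is the standard one and is sound as a plan; the one caveat is that the sketch defers exactly the part where all the work lies, namely establishing the powerfully embedded property of the relevant subgroups and organising the induction (on $|G|$ or on the nilpotency class) so that it is not circular --- note for instance that the containment $\gamma_{3}(G)\leq [G,G]^{p}$ you invoke is itself an instance of part (ii).
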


For proofs of these facts and a textbook exposition on powerful $p$-groups we strongly recommend Chapter 11 of \cite{khukhro_1998}.

For completeness we include a proof of the fact that for any odd prime $p$, metacyclic $p$-groups are powerful.

\begin{lemma}
\label{lemma metacyclic pgroups are powerful for odd p}
Let $p$ be an odd prime and $G$ a finite $p$-group. If $G$ is metacyclic then $G$ it powerful. 
\end{lemma}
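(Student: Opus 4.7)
The plan is to unpack the definition of metacyclic to get explicit generators, then reduce the powerful condition to a statement about how one generator acts on the other. Since $G$ is metacyclic, it has a normal cyclic subgroup $N = \langle b\rangle$ with $G/N$ cyclic, say generated by the image of some element $a$. I would first observe that this forces $G = \langle a, b\rangle$, so that $[G,G]$ is the normal closure of the single commutator $[a,b]$ in $G$. Because $G^{p}$ is a characteristic, and in particular normal, subgroup of $G$, it then suffices to show $[a,b] \in G^{p}$.

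Next I would analyse the action of $a$ on $N$ by conjugation. Writing $a^{-1}ba = b^{k}$ for some integer $k$, the automorphism $b \mapsto b^{k}$ of the cyclic $p$-group $\langle b\rangle$ must have order a power of $p$, since the order of $a$ is a power of $p$. The key input is the structure of $\operatorname{Aut}(C_{p^{n}})$ for odd $p$: it is cyclic of order $p^{n-1}(p-1)$, and its unique Sylow $p$-subgroup consists exactly of those maps $b \mapsto b^{k}$ with $k \equiv 1 \pmod{p}$. Thus the hypothesis that $p$ is odd forces $k = 1 + pm$ for some integer $m$.

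With $k \equiv 1 \pmod{p}$ in hand, a direct calculation gives $[a,b] = b^{1-k} = b^{-pm} \in \langle b\rangle^{p} \leq G^{p}$. Taking the normal closure and invoking normality of $G^{p}$ yields $[G,G] \leq G^{p}$, which is the defining condition of a powerful $p$-group for odd $p$.

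The only real obstacle is the step in the middle: identifying the $p$-part of $\operatorname{Aut}(C_{p^{n}})$ with $\{b \mapsto b^{k} : k \equiv 1 \pmod p\}$, and thereby forcing $k \equiv 1 \pmod p$. This is exactly where the hypothesis $p > 2$ is used, since for $p=2$ the automorphism group has a non-cyclic $2$-part and the argument breaks down (consistent with the fact that the lemma is known to fail at $p=2$). Everything else is essentially bookkeeping with commutators and the fact that $G^p$ is normal.
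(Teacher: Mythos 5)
Your proof is correct, and it takes a somewhat different route from the paper's. The paper passes to the quotient $\bar{G}=G/G^{p}$, where $\bar{b}$ has order at most $p$, and invokes the observation (via the $N/C$ theorem, since $|\operatorname{Aut}(C_{p})|=p-1$ is coprime to $p$) that an element of order $p$ cannot be conjugate to a proper power of itself; this forces $\bar{b}^{\bar{a}}=\bar{b}$, so $\bar{G}$ is abelian and $[G,G]\leq G^{p}$. You instead stay inside $G$ and use the explicit structure of $\operatorname{Aut}(C_{p^{n}})$ for odd $p$ --- namely that its Sylow $p$-subgroup is exactly the kernel of reduction modulo $p$ --- to force $k\equiv 1\pmod{p}$ and compute $[a,b]=b^{1-k}\in\langle b\rangle^{p}$ directly. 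Both arguments hinge on the same number-theoretic input (a $p$-element must act trivially on $C_{p^{n}}$ modulo $p$th powers), but the paper's version needs only the order of $\operatorname{Aut}(C_{p})$, whereas yours needs the full Sylow structure of $\operatorname{Aut}(C_{p^{n}})$; in exchange your computation yields the slightly sharper conclusion $[a,b]\in\langle b\rangle^{p}$, i.e.\ $[N,G]\leq N^{p}$ for the normal cyclic subgroup $N$, which the paper records separately as Remark (i) following the lemma. Your closing comment about where $p>2$ is used is also accurate in substance, though the essential failure at $p=2$ is not so much that the $2$-part of $\operatorname{Aut}(C_{2^{n}})$ is non-cyclic as that it is the whole group and therefore contains automorphisms $b\mapsto b^{k}$ with $k\not\equiv 1\pmod{4}$ (e.g.\ inversion, as in the dihedral group of order $8$).
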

\begin{proof}
 First observe that in any group, an element of order $p$ cannot be conjugate to a (proper) power of itself. This follows from the N/C Theorem: If $H \leq G$ then $\frac{N_{G}(H)}{C_{G}(H)}\cong K \leq \text{Aut}(G),$ and so anything that normalises a group of order $p$ must in fact centralise it. 
 
 Now consider a metacyclic $p$-group $G=\langle a, b \rangle$ where $\langle b \rangle $ is normal in $G$. Then we claim that $\bar{G}=G/G^{p}$ is abelian, since $\bar{b}^{\bar{a}} \leq \langle \bar{b} \rangle$, and by the observation above we must have that $\bar{b}^{\bar{a}}=\bar{b}.$ Hence $[G,G] \leq G^{p}$ and if $p$ is odd then $G$ is powerful.
\end{proof}
\begin{remark}
\begin{enumerate}[(i)]

    \item In fact, in any $p$-group $G$, if $N$ is a cyclic normal subgroup then by considering $G/N^{p}$ the same argument as in the proof of Lemma \ref{lemma metacyclic pgroups are powerful for odd p} shows that $[N,G] \leq N^{p}$. 

    \item The fact that an element of order $p$ cannot be conjugate to a power of itself is a key ingredient in \cite{HobbyFrattini}, where Hobby proves that non-abelian groups with cyclic center can never occur as the Frattini subgroup of a finite $p$-group.
\end{enumerate}
\end{remark}

We now turn our attention to proving Theorem \ref{theorem into: G cyclic if all normal abelian subgroups are cyclic}. We shall need the following lemma. 
\begin{lemma}
\label{lemma adjusting generators for 2 gen pow p group}
Let $G=\langle a,b \rangle$ be a powerful $p$-group such that $G^{p}=\langle a^{p} \rangle$. Then there exists an element $c \in G$ of order $p$ such that $G = \langle a, c \rangle$. 
\end{lemma}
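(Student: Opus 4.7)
The plan is to exhibit $c$ as a nonidentity element of $\Omega_1(G)$ lying outside $\langle a\rangle$, and then to verify $\langle a,c\rangle = G$ via the Burnside basis theorem. The argument is essentially a counting argument once the structure of $\Omega_1(G)$ is made precise.

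First I would observe that because $G$ is powerful, $\Phi(G) = [G,G]\,G^p = G^p = \langle a^p\rangle$, which is cyclic and already contained in $\langle a\rangle$. Hence $G/\Phi(G) \cong C_p \times C_p$, confirming $d(G) = 2$, and crucially $\langle a\rangle\,\Phi(G) = \langle a\rangle$. So by the Burnside basis theorem, showing that some $c \in G$ satisfies $\langle a,c\rangle = G$ reduces to showing that $c \notin \langle a\rangle$.

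Next I would invoke the fact (used throughout the paper) that powerful $p$-groups have a regular power structure when $p$ is odd. Condition~(\ref{eqn index condition}) then yields
\[
|\Omega_1(G)| \;=\; |G:G^p| \;=\; p^2,
\]
and condition~(\ref{eqn omega conditon}) tells us that every nonidentity element of $\Omega_1(G)$ has order exactly $p$. Since $\langle a\rangle$ is cyclic, $\langle a\rangle \cap \Omega_1(G)$ is either trivial or the unique subgroup of order $p$ inside $\langle a\rangle$, so
\[
|\langle a\rangle \cap \Omega_1(G)| \;\leq\; p \;<\; p^2 \;=\; |\Omega_1(G)|.
\]
Hence $\Omega_1(G) \not\subseteq \langle a\rangle$, and we may pick any $c \in \Omega_1(G)\setminus\langle a\rangle$. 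This $c$ has order $p$ and lies outside $\langle a\rangle = \langle a\rangle\,\Phi(G)$, so $\bar a,\bar c$ are $\mathbb{F}_p$-independent in $G/\Phi(G)$; the Burnside basis theorem then gives $\langle a,c\rangle = G$.

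The main obstacle, if there is one, is purely notational: arranging the correct properties of $\Omega_1(G)$ and $\Phi(G)$ in a powerful $p$-group. One should also briefly dispatch the degenerate situation where $a^p = 1$ (so $G$ has exponent $p$ and the claim is immediate by taking $c=b$ or $c=a$). Once these are pinned down, the proof reduces to a one-line comparison of $|\Omega_1(G)|$ with $|\langle a\rangle\cap\Omega_1(G)|$.
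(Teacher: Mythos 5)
Your argument is correct for odd $p$, but it reaches the conclusion by a genuinely different route from the paper. The paper's proof is an induction on the exponent: it passes to $G/G^{p^{k}}$, lifts a generator $d$ that has order $p$ modulo $G^{p^{k}}$, and then explicitly corrects it to $c=d a^{-\lambda p^{k-1}}$, verifying $c^{p}=1$ using only the elementary facts $[G^{p^{i}},G^{p^{j}}]\leq [G,G]^{p^{i+j}}$ and $G^{p^{i}}=\{g^{p^{i}} \mid g\in G\}$; in particular that proof is constructive and is valid for $p=2$ as well. You instead import the heavier (but freely used elsewhere in the paper) fact that powerful $p$-groups have a regular power structure for odd $p$: conditions (\ref{eqn omega conditon}) and (\ref{eqn index condition}) give $|\Omega_{1}(G)|=|G:G^{p}|=p^{2}$ with every element of order dividing $p$, so $\Omega_{1}(G)\not\subseteq \langle a\rangle$ by counting, and any $c\in \Omega_{1}(G)\setminus \langle a\rangle$ works via the Burnside basis theorem because $\Phi(G)=G^{p}\leq \langle a\rangle$. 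This is shorter and non-constructive. Two small points you should patch: (i) $G/\Phi(G)\cong C_{p}\times C_{p}$ is not automatic from the hypotheses --- if $G$ happens to be cyclic then $d(G)=1$ --- so the cyclic case needs to be dispatched separately (take $c$ to be the element of order $p$ in $\langle a\rangle$), alongside the exponent-$p$ case you already mention; (ii) your proof silently assumes $p$ is odd, since powerful $2$-groups need not have a regular power structure, whereas the lemma as stated (and the paper's proof) carries no such restriction. Neither point is fatal here, because the lemma is only invoked for odd primes in the proof of Theorem \ref{theorem into: G cyclic if all normal abelian subgroups are cyclic}, and there the subgroup in question is assumed non-cyclic.
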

\begin{proof}
We prove the claim by induction on the exponent of $G$. The result is clearly true when the exponent is $p$. Now suppose the exponent is $p^{k+1}$ and that the claim holds for smaller exponent. Consider the quotient $\bar{G}=G/G^{p^{k}} = \langle \bar{a}, \bar{b} \rangle$. Then $\bar{G}$ satisfies the inductive hypothesis and is of smaller exponent, hence there exists an element $\bar{d} \in \bar{G}$ with $o(\bar{d})=p$ and $\bar{G}=\langle \bar{a}, \bar{d}\rangle$. Then $G= \langle a, d \rangle$ and $d^{p} \in G^{p^{k}}= \langle a^{p^{k}} \rangle $. Then we have that $d^{p} = a^{\lambda p^{k}}$ for some $0 \leq  \lambda <p$. Let $c=d a^{-\lambda p^{k-1}}$. By observing that $[G,G^{p^{k-1}}]\leq G^{p^{k}} \leq Z(G)$ and that $[G,G^{p^{k-1}}]^{p}=1$ we see that $c^{p}=1$ as required.  
\end{proof}

We are now in a position to prove Theorem \ref{theorem into: G cyclic if all normal abelian subgroups are cyclic}.
\begin{proof}[Proof of Theorem \ref{theorem into: G cyclic if all normal abelian subgroups are cyclic}]
Let $p$ be an odd prime and $G$ be a finite $p$-group with the property that every normal abelian subgroup of $G$ is cyclic. We shall show that $G$ is cyclic. The result is clear in the case that $|G|=p$. Thus we can suppose $|G|\geq p^{2}$. Notice that if $a \in Z_{2}(G)$ then $H=\langle a, Z(G) \rangle$ is an abelian normal subgroup and thus cyclic. Hence we can assume that $G$ contains a cyclic normal subgroup of order at least $p^{2}$. We will show that the existence of a cyclic normal subgroup of order $p^{k} \geq p^{2}$ implies the existence of one of order $p^{k+1}$. It  then follows that $G$ is cyclic.

Suppose that $H= \langle a \rangle $ is a cyclic normal subgroup of order $p^{k} \geq p^{2}$. If $G=H$ we are done, otherwise as $G$ is a $p$-group there exists a normal subgroup $N$ of $G$ with order $p^{k+1}$ such that $N \geq H $. We will show that $N$ is cyclic. Notice that $N$ is a metacyclic $p$-group (since $N/H$ is cyclic), and thus it is a powerful $p$-group and we shall write $N= \langle a, b \rangle$. If $N$ is cyclic we are done, if not then we have $N^{p}=\langle a^{p} \rangle$ and so by  Lemma  \ref{lemma adjusting generators for 2 gen pow p group}  we can find $c\in N$ of order $p$ such that $N= \langle a, c \rangle $. As $[c,a]=c^{-1}c^{a}$ is the product of two elements of order $p$ and the group has a regular power structure since it is powerful, then $o([c,a]) \leq p$. Thus $N^{\prime}$ is cyclic of order at most $p$ and $N$ has nilpotency class at most $2$, and so we  have that $[c,a]^{p}=[c^p,a]=1$ and so $J= \langle a^{p}, c \rangle $ is abelian. Our next step is to show that $J$ is normal in $G$.

As $H=\langle a \rangle$ is normal in $G$, it follows that $\langle a^{p} \rangle$ is normal in $G$. Next notice that for any $g \in G$, since $N$ is normal in $G$ we have that $c^{g} \in N$. We have that $c^{g}$ is of order $p$ and thus $c^{g} \leq \Omega_{1}(N) \leq \langle a^{p},c \rangle $, since by the second regular power structure property (\ref{eqn omega conditon}), the exponent of $\Omega_{1}(N) \leq p$. Thus it follows that $J$ is normal in $G$. Hence $J$ must be cyclic and by order considerations of $a^{p}$ and $c$ we must have $c \in \langle a^{p} \rangle $ and thus $N= \langle a , c \rangle = \langle a \rangle $ is cyclic.
\end{proof}

\section{Minimal generation and powerful groups}
In this final section we give a short and elementary proof of a key result in the theory of $p$-groups, that for a powerful $p$-group $G$ we have that $d(H) \leq d(G)$ for all $H \leq G$. Our argument makes use of only one of the regular power structure properties, property (\ref{eqn pth power}).

We will need the following lemma, which is easily proved by basic properties of powerful $p$-groups and the commutator collection formula of P. Hall. 
\begin{lemma}
\label{lemma powerful p groups are p^e-1 abelian}
Let $G$ be a powerful $p$-group of exponent $p^{e}$ and $g,h \in G$. Then $(gh)^{p^{e-1}}=g^{p^{e-1}}h^{p^{e-1}}.$
\end{lemma}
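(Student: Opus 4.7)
The plan is to apply the Hall--Petrescu collection formula to $(gh)^{p^{e-1}}$ and then use the structure of a powerful $p$-group to kill every error term. Writing out the formula,
\[
(gh)^{p^{e-1}} = g^{p^{e-1}} h^{p^{e-1}} \prod_{i \geq 2} c_i^{\binom{p^{e-1}}{i}},
\]
where each $c_i$ lies in $\gamma_i(\langle g,h\rangle) \leq \gamma_i(G)$. The task therefore reduces to showing that $c_i^{\binom{p^{e-1}}{i}} = 1$ for every $i \geq 2$.

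My next step would be to record the standard bound $\gamma_i(G) \leq G^{p^{i-1}}$ for $i \geq 2$ in an odd powerful $p$-group. For $i = 2$ this is the definition of powerful, and for larger $i$ it follows by induction using part (ii) of the recalled proposition together with $[G,G] \leq G^p$: namely
\[
\gamma_{i+1}(G) = [\gamma_i(G), G] \leq [G^{p^{i-1}}, G] \leq [G,G]^{p^{i-1}} \leq G^{p^i}.
\]
Property (i) of powerful $p$-groups then lets me write $c_i = x_i^{p^{i-1}}$ for some $x_i \in G$, so $c_i^{\binom{p^{e-1}}{i}} = x_i^{p^{i-1} \binom{p^{e-1}}{i}}$.

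The argument would close with a short $p$-adic valuation check. By Kummer's theorem, $v_p\binom{p^{e-1}}{i} = (e-1) - v_p(i)$, so the total exponent of $x_i$ has $p$-adic valuation at least $(i-1) + (e-1) - v_p(i)$. This is at least $e$ as soon as $i - v_p(i) \geq 2$, an inequality that is easy to verify for every $i \geq 2$ when $p$ is odd. Since $G$ has exponent $p^e$, the factor is trivial and the product collapses to $g^{p^{e-1}} h^{p^{e-1}}$, as desired.

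The main obstacle is really just the case $p = 2$, where the inequality $i - v_2(i) \geq 2$ fails at $i = 2$. I would handle this by invoking the stronger powerful relation $[G,G] \leq G^4$, which upgrades the central-series bound to $\gamma_i(G) \leq G^{2^{2(i-1)}}$; the same valuation check then succeeds with room to spare. Everything else is bookkeeping.
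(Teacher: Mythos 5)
Your proposal is correct and follows essentially the same route as the paper: both rest on P.~Hall's collection formula together with the bound $\gamma_i(G)\leq G^{p^{i-1}}$ for powerful $p$-groups, after which every error term dies against the exponent $p^{e}$. The only difference is cosmetic --- you use the explicit Hall--Petrescu product and do the binomial-coefficient valuation bookkeeping by hand via Kummer's theorem (including the separate $p=2$ check using $[G,G]\leq G^{4}$), whereas the paper quotes the congruence form $(xy)^{p^{n}}\equiv x^{p^{n}}y^{p^{n}}$ modulo $\gamma_{2}(T)^{p^{n}}\gamma_{p}(T)^{p^{n-1}}\cdots\gamma_{p^{n}}(T)$, in which that bookkeeping is already packaged.
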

\begin{proof}
We use the following formulation of P. Hall's collection formula. If $G$ is a group, $x,y \in G,$ and $n \in \mathbb{N}$ then  
\begin{equation}
\label{p hall collection  (xy)^p^n}
(xy)^{p^{n}} \equiv x^{p^{n}}y^{p^{n}} \left(\thinspace \text{mod} \thinspace \gamma_{2}(T)^{p^{n}}\gamma_{p}(T)^{p^{n-1}}\dots \gamma_{p^{n}}(T)\right)
\end{equation} where $T= \langle x,y \rangle$.
Then for a powerful $p$-group $G$ we have that $\gamma_{i}(G)\leq G^{p^{i-1}}$ (this is true for any prime $p$ including $p=2$). It is then clear that if $G$ has exponent $p^{e}$ and $n=e-1$ that each term in the congruence of (\ref{p hall collection  (xy)^p^n}) is trivial. Hence $(xy)^{p^{e-1}}=x^{p^{e-1}}y^{p^{e-1}}$. 
\end{proof}
\begin{remark}
We remark that in \cite{HobbyCharSubgroupOfPGroup}, Hobby introduced the notion of a $p$-group being \textit{$p$-abelian} if for any $a,b \in G$ we have that $(ab)^{p}=a^{p}b^{p}$, thus in this spirit we could say that powerful $p$-groups are $p^{e-1}$-abelian.
\end{remark}

Recall that for any finite $d$-generator $p$-group $G$, $G/\Phi(G)$ can be thought of as a $d$ dimensional vector space over $\mathbb{F}_{p}$. We say that elements $a,b \in G$ are \textit{linearly
independent} over $\Phi(G)$ if their images are linearly independent in $G/\Phi(G)$. Notice that for a powerful $p$-group $\Phi(G)=G^{p}$. 

We can now prove the following.

\begin{theorem}
\label{Theorem generating set size for subgroup lifts up}
Let $G$ be a powerful $p$-group and $H \leq G$ with $d(H)=r$. Then $G$ contains a set of $r$ linearly independent elements over $G^{p}$.
\end{theorem}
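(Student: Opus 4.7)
The plan is to induct on the exponent $p^e$ of $G$. The base case $e=1$ is immediate, since then $G$ is elementary abelian, $G^p=1$, and a minimal generating set of $H$ is already a linearly independent subset of $G=G/G^p$.

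For the inductive step $(e\geq 2)$, set $N=G^{p^{e-1}}$. I would first check that $N$ is a central elementary abelian subgroup of $G$ contained in $G^p$: centrality follows from $[N,G]\leq [G,G]^{p^{e-1}}\leq G^{p^e}=1$ by powerfulness, and every element of $N$ is a $p^{e-1}$-th power by property~\eqref{eqn pth power}, hence has order dividing $p$. By Lemma~\ref{lemma powerful p groups are p^e-1 abelian}, the map $\phi(g)=g^{p^{e-1}}$ is a surjective homomorphism $G\to N$, and $\bar G:=G/N$ is powerful of exponent at most $p^{e-1}$. Since $\phi(G^p)\leq G^{p^e}=1$, $\phi$ descends to a surjective $\mathbb{F}_p$-linear map $\bar\phi:G/G^p\twoheadrightarrow N$.

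Applying the inductive hypothesis to $(\bar G,\bar H)$ with $\bar H=HN/N$ produces $s:=d(\bar H)$ linearly independent elements $\bar x_1,\ldots,\bar x_s$ in $\bar G/\bar G^p=G/G^p$ (the identification using $N\leq G^p$). A short calculation with Frattini quotients yields $d(H)=s+t$, where $t=\dim_{\mathbb{F}_p}(H\cap N)/((H\cap N)\cap\Phi(H))$. For the remaining $t$ elements I choose a basis $a_1,\ldots,a_t$ of this quotient inside $H\cap N$ and, by property~\eqref{eqn pth power}, pick $g_i\in G$ with $g_i^{p^{e-1}}=a_i$. Each $g_i$ lies outside $G^p$ (else $a_i=g_i^{p^e}=1$), and applying $\bar\phi$ to any dependence $\prod \bar g_i^{c_i}=0$ yields $\prod a_i^{c_i}=1$ in $N$, forcing $c_i\equiv 0\pmod p$ by independence of the $a_i$.

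The crux --- and the main obstacle --- is showing that the combined list $\bar x_1,\ldots,\bar x_s,\bar g_1,\ldots,\bar g_t$ is jointly linearly independent in $G/G^p$. Given a joint relation, applying $\bar\phi$ yields a relation in the elementary abelian group $N$; if the inductive lifts $\bar x_j$ can be chosen to lie in $\ker\bar\phi$ (a subspace of $G/G^p$ of codimension $\dim N$), the relation reduces to one among the $a_i$ alone, after which independence of the $a_i$ and of the $\bar x_j$ finishes the proof. Establishing that such a compatible choice of $\bar x_j$ is always available is the delicate technical point; I would either strengthen the inductive hypothesis to track the kernel $\ker\bar\phi$, or argue by contradiction using $\dim N\leq d(G)$ together with the central elementary abelian structure of $N$.
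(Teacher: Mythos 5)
Your set-up is fine as far as it goes: $N=G^{p^{e-1}}$ is indeed central and elementary abelian, $\phi(g)=g^{p^{e-1}}$ is a homomorphism onto $N$ by Lemma \ref{lemma powerful p groups are p^e-1 abelian}, and the count $d(H)=s+t$ is correct. The genuine gap is exactly the one you flag yourself: the joint independence of $\bar x_1,\dots,\bar x_s,\bar g_1,\dots,\bar g_t$. The inductive hypothesis, as you have stated it, only asserts that $\bar G$ \emph{contains some} set of $s$ independent elements over $\bar G^{p}$; it gives no control over which elements these are. Nothing prevents them from coinciding modulo $G^{p}$ with some of your $g_i$, and nothing lets you arrange $x_j\in\ker\phi$: since $n^{p^{e-1}}=1$ for every $n\in N$ (as $e\geq 2$), the value $\phi(x_j)$ is constant on the coset $x_jN$, so you cannot adjust a lift within its coset to land in $\ker\phi$. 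Your proposed fallbacks (strengthening the hypothesis ``to track $\ker\bar\phi$'', or using $\dim N\leq d(G)$) are not carried out, so the argument is incomplete at its decisive step.

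The paper repairs precisely this defect by strengthening what is proved: write $H=\langle a_1^{p^{b_1}},\dots,a_r^{p^{b_r}}\rangle$ with each $a_i\notin G^{p}$ (extract $p$th powers maximally) and show that \emph{these particular} $a_1,\dots,a_r$ are independent over $G^{p}$. The induction then quotients by a single central subgroup $\langle z\rangle$ of order $p$ with $z\in G^{p^{e-1}}$, rather than by all of $N$, so that $d(\bar H)$ drops by at most one; the one possibly lost generator is $z=g^{p^{e-1}}$ itself, and Lemma \ref{lemma powerful p groups are p^e-1 abelian} shows that a dependence of $g$ on $a_1,\dots,a_{r-1}$ modulo $G^{p}$ would force $g^{p^{e-1}}\in\langle a_1^{p^{b_1}},\dots,a_{r-1}^{p^{b_{r-1}}}\rangle$ and hence $d(H)<r$, a contradiction. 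If you want to salvage your decomposition through $N$, you would need to carry through the induction a statement tying the $s$ produced elements to a generating set of $\bar H$ (for instance, that they are roots of its generators), which is in effect what the paper's formulation does.
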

\begin{proof}
 Suppose $d(H)=r$ and express the generating set of $H$ maximally as $p$th powers, so $H=\langle a_{1}^{p^{b_{1}}}, \dots, a_{r}^{p^{b_{r}}} \rangle$ and $a_{i} \notin G^{p}$ for each $i$. Then we claim that $a_{1}, \dots, a_{r}$ are linearly independent over $G^{p}$.
 It is clear that the claim is true for powerful $p$-groups of exponent $p$ as these groups are abelian. 
 
 Let $G$ be a powerful $p$-group with exponent $p^{e} \geq p^{2}$. Suppose that the claim holds for all groups of smaller order.  There is some central element in $G^{p^{e-1}}$ of order $p$, call this element $z$;  notice that this element can be written as $g^{p^{e-1}}$ for some $g \in G$. Consider the quotient $\bar{G} = G/\langle z \rangle,$ and let $\bar{H}$ be the image of $H$ under the natural map. 
 
 If $d(\bar{H})=r$, then $\bar{a_{1}}, \dots, \bar{a_{r}}$ are linearly independent over $\bar{G}^{p}= \frac{G^{p}}{\langle z \rangle}$, thus $a_{1}, \dots, a_{r}$ are linearly independent over $G^{p}$.
 
 Now assume $d(\bar{H}) \neq r$. Then as we have quotiented out by a group of order $p$, we must have $d(\bar{H})=r-1$. Then we may assume that some generator of $H$ is $z=g^{p^{e-1}}$. So $H = \langle a_{1}^{p^{b_{1}}}, \dots, a_{r-1}^{p^{b_{r-1}}}, g^{p^{e-1}} \rangle$. As above, we use the induction hypothesis to deduce that $a_{1}, \dots, a_{r-1}$ are linearly independent over $G^{p}$. Finally we claim that in fact $a_{1}, \dots, a_{r-1}, g$ are linearly independent over $G^{p}$. 
 Suppose for contradiction that  $g = a_{1}^{\lambda_{1}} \dots a_{r-1}^{\lambda_{r-1}} x^{p} $ for some non negative integers $\lambda_{i}$ and some $x \in G$. Then by Lemma \ref{lemma powerful p groups are p^e-1 abelian} above, we have that $g^{p^{e-1}}=a_{1}^{p^{e-1}\lambda_{1}} \dots a_{r-1}^{p^{e-1}\lambda_{r-1}}$, and so $g^{p^{e-1}} \in \langle a_{1}^{p^{b_{1}}}, \dots, a_{r-1}^{p^{b_{r-1}}} \rangle $, thus $d(H) \neq r$, and we have reached a contradiction.
\end{proof}

Combining this with the Burnside Basis Theorem \cite[Theorem 12.2.1]{hall1999theory}  gives the following result.
\begin{corollary}
If $G$ is a powerful $p$-group and $H \leq G$, then $d(H) \leq d(G).$
\end{corollary}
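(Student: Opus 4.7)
The plan is to combine Theorem \ref{Theorem generating set size for subgroup lifts up} directly with the Burnside Basis Theorem to obtain the bound $d(H)\leq d(G)$. The theorem just established produces, from any subgroup $H\leq G$ with $d(H)=r$, a collection of $r$ elements of $G$ that are linearly independent over $G^{p}$, and since $G$ is powerful we have $\Phi(G)=G^{p}$, so these $r$ elements descend to $r$ linearly independent vectors in $G/\Phi(G)$ viewed as an $\mathbb{F}_{p}$-vector space.

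Concretely, I would first set $r=d(H)$ and invoke Theorem \ref{Theorem generating set size for subgroup lifts up} to obtain elements $a_{1},\ldots,a_{r}\in G$ that are linearly independent over $G^{p}=\Phi(G)$. Next, I would recall the Burnside Basis Theorem, which identifies $d(G)$ with $\dim_{\mathbb{F}_{p}}(G/\Phi(G))$. The existence of $r$ linearly independent elements in this vector space forces $\dim_{\mathbb{F}_{p}}(G/\Phi(G))\geq r$, and therefore $d(G)\geq r=d(H)$.

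There is no serious obstacle here: all the real work was already carried out in Theorem \ref{Theorem generating set size for subgroup lifts up}, and the only small point that must be observed explicitly is the equality $\Phi(G)=G^{p}$ for powerful $p$-groups, which is a standard consequence of $[G,G]\leq G^{p}$ (so that $G/G^{p}$ is an elementary abelian $p$-group, and thus coincides with $G/\Phi(G)$). The proof is therefore essentially a one-line deduction.
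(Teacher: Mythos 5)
Your proof is correct and follows exactly the route the paper intends: apply Theorem \ref{Theorem generating set size for subgroup lifts up} to produce $d(H)$ linearly independent elements over $G^{p}=\Phi(G)$, then invoke the Burnside Basis Theorem to conclude $d(G)\geq d(H)$. The paper states this corollary as an immediate consequence of those two ingredients, so your argument (including the explicit observation that $\Phi(G)=G^{p}$ for powerful groups) matches it precisely.
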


\section*{Acknowledgements}
The author gratefully acknowledges the many helpful suggestions of Tim Burness during the preparation of the paper. The author also wishes to express his thanks to Gareth Tracey for many helpful discussions and for suggesting Question \ref{question about burnside}. 

\bibliographystyle{amsplain} 
\bibliography{bibliography}
\end{document}